\newcommand{\seqnum}[1]{\href{http://oeis.org/#1}{\underline{#1}}}
\newtheorem{theorem}{Theorem} 
\newtheorem{lemma}{Lemma} 
\newtheorem{proposition}{Proposition}
\newtheorem{corollary}{Corollary}
\journal{Theoretical Computer Science}
\begin{document}

\begin{frontmatter}

\title{The number of valid factorizations of Fibonacci prefixes}

\author[I2M]{Pierre Bonardo}

\author[I2M]{Anna E. Frid}
\ead{anna.e.frid@gmail.com}
\ead[url]{http://iml.univ-mrs.fr/~frid/}
\author[Waterloo]{Jeffrey Shallit}
\ead{shallit@uwaterloo.ca}
\ead[url]{https://cs.uwaterloo.ca/~shallit/}

\address[I2M]{Aix Marseille Univ, CNRS, Centrale Marseille, I2M, Marseille, France}
\address[Waterloo]{School of Computer Science, University of Waterloo, Waterloo, ON  N2L 3G1, Canada}

\begin{abstract}
We establish several recurrence relations and an explicit formula 
for $V(n)$, the number of factorizations of the length-$n$ prefix of the Fibonacci word into a (not necessarily strictly) decreasing sequence of standard Fibonacci words. In particular, we show that
the sequence $V(n)$ is the shuffle of the 
ceilings of two linear functions of $n$. 
\end{abstract}

\begin{keyword}
numeration systems \sep Fibonacci numeration system \sep Fibonacci word
\MSC[2010] 68R15, 11B39
\end{keyword}

\end{frontmatter}


\section{Introduction}
In the classical Fibonacci, or Zeckendorf, numeration system \cite{lekkerkerker,zeckendorf}, a positive integer is represented as a sum of Fibonacci numbers:
$$n=F_{m_k}+F_{m_{k-1}}+\cdots + F_{m_0},$$ where
$m_k > m_{k-1} > \cdots > m_0 \geq 2$ and, as usual,
$F_0=0$, $F_{1}=1$, and $F_{m+2}=F_{m+1}+F_{m}$ for all $m\geq 0$.
For example, $16=13+3=F_7+F_4= [100100]_F$, where a digit in 
brackets is $1$ if the respective 
Fibonacci number appears in the sum, and $0$ otherwise. Here a representation ends by the digits corresponding to $F_4=3$, $F_3=2$ and $F_2=1$.
 
Under the condition that $m_{i}$ and $m_{i+1}$ are never consecutive, 
that is, $m_{i+1}-m_i\geq 2$, or,
equivalently, that the Fibonacci numbers $F_i$ 
are chosen greedily, such a {\it canonical} representation is unique, and the
language $L_V$ of all canonical representations is given by the regular
expression $\epsilon + 1(0 + 01)^*$, where the empty word $\epsilon$ is the representation of 0.    
At the same time, if consecutive Fibonacci numbers are allowed, but at most once each, the number of such {\it legal} representations of $n$ is the well-known integer sequence \seqnum{A000119} from the Online Encyclopedia of Integer Sequences (OEIS) \cite{oeis}. Its values oscillate between $1$ (on numbers of the form $F_{i}-1$) and $\sqrt{n+1}$ (on numbers of the form $n = F_i^2-1$) \cite{stock}.  

For example, since
\begin{align*} 
16&=13+3=8+5+3=8+5+2+1=13+2+1\\ 
&= [100100]_F = [11100]_F = [11011]_F = [100011]_F,  
\end{align*} 
the number of legal representations of 16 is 4. Each legal representation of $n$ can be obtained from a canonical one by a series of replacements  
$$\cdots 100 \cdots \longleftrightarrow \cdots 011 \cdots,$$ 
corresponding to the
replacement of a Fibonacci number $F_{m+2}$ by $F_{m+1}+F_m$. 
 
In this paper, we allow even more freedom in Fibonacci representations of $n$, allowing the transformations  
\begin{equation}\label{e:k0l} 
\cdots k0l \cdots \longleftrightarrow \cdots (k-1)\; 1 (l+1) \cdots 
\end{equation} 
for all $k>0$, $l \geq 0$. Note that the introduced transformation corresponds to passing from a sum of the form $kF_{m+1}+lF_{m-1}$ to the sum $(k-1)F_{m+1}+F_m+(l+1)F_{m-1}$, and, in particular, does not change the represented number. 
 
The representations that can be obtained from the canonical one by a series of transformations as in \eqref{e:k0l} are called {\it valid}, and were introduced in \cite{frid} in a more general setting because of their link to the Fibonacci word and factorizations of its prefixes, as explained below. Clearly, each legal representation is valid, but the opposite is not true. For example, starting from the legal representation $16= [11011]_F$, we can find two more valid representations 
\[16= [10121]_F = [1221]_F ,\] 
and starting from the legal representation $16= [11100]_F$, we find a new representation 
\[16= [20000]_F,\] 
so that the total number of valid representations of 16 is 7. 
 
Let $V(n)$ denote the number of valid representations of $n$. The goal of this paper is to prove a precise formula for $V(n)$, given below in Theorem \ref{t:main}.   Our formula demonstrates that the values of $V(n)$ are determined by the shuffle of two straight lines of irrational slope; see Fig.~\ref{f:1}. 
\begin{figure} 
\centering \includegraphics[width=0.99\textwidth, height=0.2\textheight]{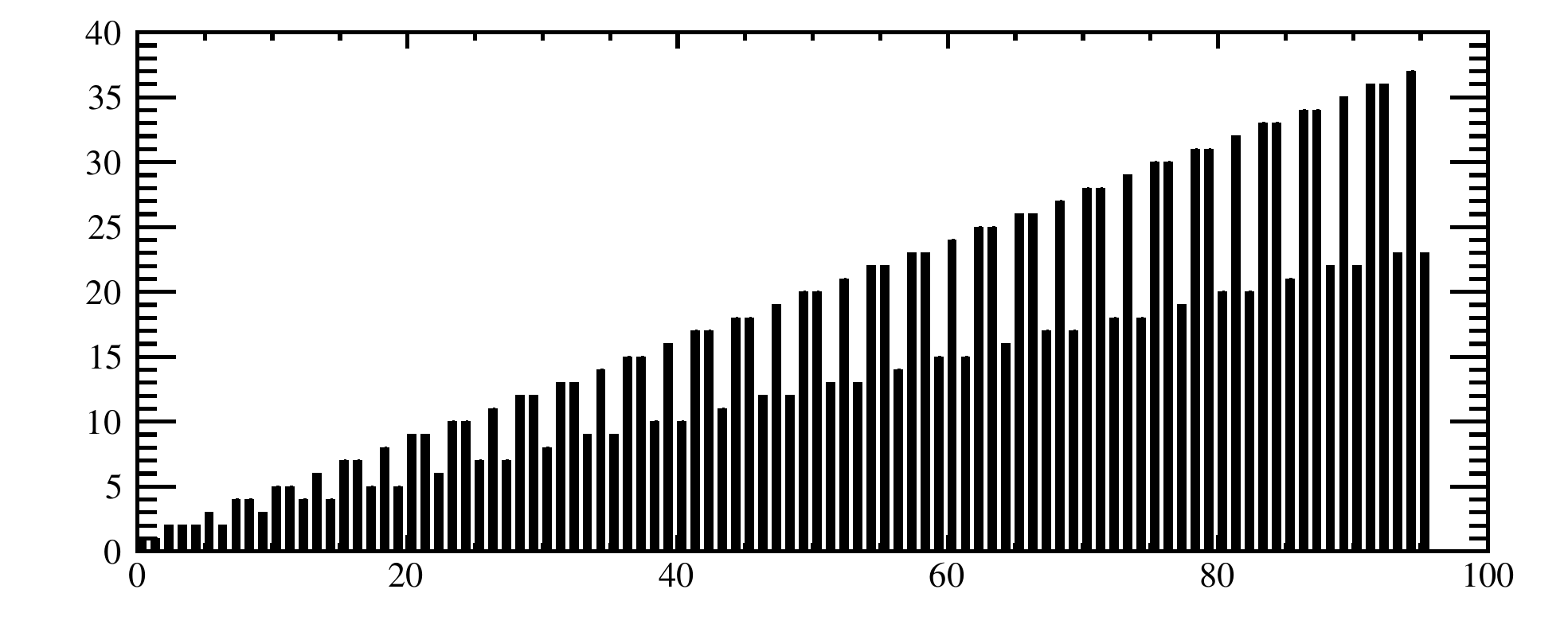} 
\caption{First 100 values of $V(n)$}\label{f:1} 
\end{figure} 
\section{Notation and Sturmian representations} 
\label{notation}
We use notation common in combinatorics on words; the reader is referred, for example, to \cite{lothaire2} for an introduction. Given a finite word $u$, we denote its length by $|u|$. The power $u^k$ just means the concatenation
$u^k=\underbrace{u\cdots u}_{k}$. The $i$'th symbol of a finite or infinite word
$u$ is denoted by $u[i]$, so that $u=u[1]u[2]\cdots$. A factor $w[i+1]w[i+2]\cdots w[j]$ of a finite or infinite word $w$, or, more precisely, its occurrence starting from position $i+1$ of $w$,  is denoted by $w(i..j]$.
In particular, for $j\geq 0$, the word
$w(0..j]$ is the prefix of $w$ of length $j$. 
 
The {\it standard Fibonacci sequence} $(f_n)$ of words over
the binary alphabet $\{a,b\}$ is defined as follows:  
\begin{equation}\label{e:def} 
f_{-1}=b, \quad\quad f_0=a, \quad\quad f_{n+1}=f_nf_{n-1} \mbox{~for all~} n\geq 0. 
\end{equation} 
The word $f_n$ is called also the {\it standard word of order $n$}. In particular, $f_1=ab$, $f_2=aba$, $f_3=abaab$, $f_4=abaababa$, and so on. 
From the definition, we easily see that
the length of $f_n$ is the Fibonacci number $F_{n+2}$.
 
The infinite word $${\bf f} =\lim_{n\to \infty} f_n=abaababaabaababaababa\cdots$$ is called the Fibonacci infinite word.   Here we index it starting
with ${\bf f}[1] = a$.
 
In the {\it Fibonacci}, or {\it Zeckendorf numeration system}, a non-negative integer $N<F_{n+3}$ is represented as a sum of Fibonacci numbers 
\begin{equation}\label{e:numr} 
N=\sum_{0\leq i \leq n} k_i F_{i+2}, 
\end{equation} 
where $k_i \in \{0,1\}$ for $i \geq 0$. In the canonical version of the definition, the following condition holds: 
\begin{equation}\label{e:can_cond} 
 \mbox{for } i \geq 1, \mbox{ if } k_i=1, \mbox{ then } k_{i-1}=0. 
\end{equation} 
Under this nonadjacency condition, the representation of $N$ is unique up to leading zeros. However, by removing the nonadjacency
condition, we can get multiple representations: for example, $14=F_7+F_2=F_6+F_5+F_2=F_6+F_4+F_3+F_2$. We call such representations {\it legal} and denote a representation $N=\sum_{0\leq i \leq n} k_i F_{i+2}$ by $N= [k_n\cdots k_0]_F $. If the condition \eqref{e:can_cond} holds, we call the representation {\it canonical}. 
 
Let $L(n)$ denote the number of legal representations of $n$. The sequence $(L(n))$ is well-studied (see, e.g., \cite{berstel_fib}) and listed in the OEIS as sequence \seqnum{A000119}.  In particular, $1\leq L(n)\leq \sqrt{n+1}$, and both bounds are precise \cite{stock}. 
 
The following lemma is a particular case of \cite[Prop.~2]{frid}. 
\begin{lemma}\label{l:l1} 
For all $k_0,\ldots, k_n$ such that $k_i \in \{0,1\}$, the word $f_n^{k_n} f_{n-1}^{k_{n-1}}\cdots f_0^{k_0}$ is a prefix of the Fibonacci word $\bf f$. 
\end{lemma}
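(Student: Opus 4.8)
The plan is to argue by induction on $n$, using the Fibonacci morphism $\varphi$ defined by $\varphi(a)=ab$ and $\varphi(b)=a$. Two preliminary facts, both routine, do the heavy lifting: first, $\mathbf{f}$ is the fixed point of $\varphi$, so that $\varphi(\mathbf{f})=\mathbf{f}$ and consequently $\varphi$ maps every prefix of $\mathbf{f}$ to a prefix of $\mathbf{f}$ (if $\mathbf{f}=uv$ then $\mathbf{f}=\varphi(\mathbf{f})=\varphi(u)\varphi(v)$); second, $\varphi(f_i)=f_{i+1}$ for every $i\geq -1$ (check $\varphi(f_{-1})=\varphi(b)=a=f_0$ and $\varphi(f_0)=ab=f_1$, then induct via $\varphi(f_{i+1})=\varphi(f_i)\varphi(f_{i-1})=f_{i+1}f_i=f_{i+2}$). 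The base case $n=0$ is immediate, since $f_0^{k_0}$ is either $\epsilon$ or $a$, both prefixes of $\mathbf{f}$.

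For the inductive step the key observation is that applying $\varphi$ shifts every index up by one, so that
\[
W_n:=f_n^{k_n}f_{n-1}^{k_{n-1}}\cdots f_0^{k_0}=\varphi\bigl(f_{n-1}^{k_n}f_{n-2}^{k_{n-1}}\cdots f_0^{k_1}f_{-1}^{k_0}\bigr).
\]
Writing $U_0=f_{n-1}^{k_n}f_{n-2}^{k_{n-1}}\cdots f_0^{k_1}$, the word $U_0$ is exactly of the form covered by the induction hypothesis for $n-1$ (its exponents form the $\{0,1\}$-sequence $k_1,\dots,k_n$), hence $U_0$ is a prefix of $\mathbf{f}$. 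If $k_0=0$ then $W_n=\varphi(U_0)$, which is a prefix of $\mathbf{f}$ because $U_0$ is, and we are done.

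The only real obstacle is the case $k_0=1$, where the preimage carries a trailing $f_{-1}=b$ and we get $W_n=\varphi(U_0)\,\varphi(b)=\varphi(U_0)\,a$. Here $\varphi(U_0)$ is still a prefix of $\mathbf{f}$, but I must verify that the letter of $\mathbf{f}$ immediately following $\varphi(U_0)$ is an $a$. This is precisely where the structure of $\varphi$ resolves the difficulty: writing $\mathbf{f}=U_0\mathbf{h}$ with $\mathbf{h}$ the (nonempty) infinite tail, we have $\mathbf{f}=\varphi(\mathbf{f})=\varphi(U_0)\varphi(\mathbf{h})$, and since both $\varphi(a)$ and $\varphi(b)$ begin with $a$, the word $\varphi(\mathbf{h})$ begins with $a$. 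Hence $\varphi(U_0)a=W_n$ is a prefix of $\mathbf{f}$, closing the induction. I expect this last point — controlling the single extra letter contributed by the lowest term $f_0$ — to be the crux; everything else is bookkeeping about how $\varphi$ reindexes the standard words.
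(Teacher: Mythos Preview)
Your proof is correct. The paper itself does not supply a proof of this lemma; it merely records that the statement is a particular case of Proposition~2 in~\cite{frid}. Your induction on $n$, exploiting that the Fibonacci morphism (denoted $\mu$ in the paper, which reserves $\varphi$ for the golden ratio) sends $f_i$ to $f_{i+1}$ and maps prefixes of $\mathbf{f}$ to prefixes of $\mathbf{f}$, is a clean, self-contained substitute for the external citation. The one nontrivial step is the trailing $f_0$ when $k_0=1$, and you handle it correctly: writing $\mathbf{f}=U_0\mathbf{h}$ and observing that every $\mu$-image begins with $a$ forces the letter following $\mu(U_0)$ in $\mathbf{f}$ to be $a$. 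No gaps.
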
 
So $L(n)$ is also the number of ways to factor the prefix ${\bf f}(0..n]$ of the Fibonacci word as a sequence of standard words in strictly decreasing order. 
 
To expand this definition, in this note we consider all factorizations of Fibonacci prefixes ${\bf f}(0..n]$ as a concatenation of standard words in (non-strictly) decreasing order. We write $N= [k_n\cdots k_0]_F$ and call this representation of $N$ {\it valid} if $k_i\geq 0$ for all $i$ and ${\bf f}(0..N]=f_n^{k_n} f_{n-1}^{k_{n-1}}\cdots f_0^{k_0}$. Note that according to the previous lemma, every legal representation is valid, but not the other way around. For example, ${\bf f}(0..14]=(abaab)(aba)(aba)(aba)$, making the representation $14= [1300]_F $ valid. Theorem 1 of \cite{frid} says, in particular, that valid representations are exactly those that can be obtained from the canonical one by a series of transformations \eqref{e:k0l}. 
 
Note that a digit of a valid representation cannot exceed 3 since the Fibonacci word does not contain a factor of the form $u^4$ for any non-empty word $u$ \cite{karhumaki}.

The number of valid representations of $N$ is denoted by $V(N)$, and this note is devoted to the study of the sequence $(V(n))$, recently listed in the OEIS as sequence \seqnum{A300066}. Clearly, $V(n)\geq L(n)$, and moreover, we prove an explicit formula for  
$V(n)$ that implies its linear growth. 
 
\section{Result} 
As is well-known, the Fibonacci infinite word 
$${\bf f} = abaababa \cdots $$
is the fixed point of the Fibonacci  
morphism $\mu: a \to ab, b \to a$; moreover, for each $n \geq 1$, we have $f_n=\mu(f_{n-1})$. Consequently, if 
$N= [k_n\cdots k_0]_F $, then Lemma \ref{l:l1} 
implies that  
$$\mu({\bf f}(0..N])=\mu({\bf f}(0.. [k_n\cdots k_0]_F ])=\mu(f_n^{k_n}\cdots f_0^{k_0})=f_{n+1}^{k_n}\cdots f_1^{k_0}={\bf f}(0.. [k_n\cdots k_0 0]_F ].$$ 
 
Let $\varphi$ denote the golden ratio: $\varphi=\frac{1+\sqrt{5}}{2}$. It is important that the Fibonacci word is a Sturmian word of slope $1/(\varphi+1)=1/\varphi^2$ and zero intercept (see Example 2.1.24 of \cite{lothaire2}), that is, for all $n$, we have 
\begin{equation}\label{e:st} 
{\bf f}[n]=\begin{cases} 
        a, & \mbox{~if~} \{n/\varphi^2\}<1-1/\varphi^2;\\ 
        b, & \mbox{~otherwise}. 
       \end{cases} 
\end{equation} 
Here $\{x\} = x- \lfloor x \rfloor $ denotes the fractional part of $x$. 
 
\begin{proposition}\label{p:ab} 
If ${\bf f}[n]=a$, all valid representations of $n$ end with an even number of 0s. If ${\bf f}[n]=b$, all of them end with an odd number of 0s. 
\end{proposition}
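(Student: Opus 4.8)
The plan is to reduce the statement to a single elementary fact about the last letter of each standard word $f_m$, and then read off the parity of the trailing zeros from the letter ${\bf f}[n]$. First I would establish, by induction on $m$, that the last letter of $f_m$ is $a$ when $m$ is even and $b$ when $m$ is odd. The base cases $f_0=a$ and $f_1=ab$ are immediate, and since $f_{m+1}=f_m f_{m-1}$, the last letter of $f_{m+1}$ coincides with that of $f_{m-1}$; this preserves the parity claim across the induction step. (Including $f_{-1}=b$, of odd index, the statement is even consistent at the bottom of the recurrence \eqref{e:def}.) This parity lemma is the genuine, if routine, crux of the argument.

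Next I would fix $n\geq 1$ together with an arbitrary valid representation $n=[k_n\cdots k_0]_F$, and let $j$ be the index of the lowest nonzero digit, so that the representation ends with exactly $j$ trailing zeros and $k_j\geq 1$. By the definition of a valid representation together with Lemma \ref{l:l1}, we have ${\bf f}(0..n]=f_n^{k_n}\cdots f_j^{k_j}$, the factors of index below $j$ contributing nothing since they occur to the zeroth power. Because $k_j\geq 1$, this concatenation terminates with a full copy of $f_j$, so the last letter of ${\bf f}(0..n]$ is exactly the last letter of $f_j$. On the other hand, ${\bf f}(0..n]$ is by definition the length-$n$ prefix of ${\bf f}$, so its last letter is ${\bf f}[n]$.

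Combining these observations, ${\bf f}[n]$ equals the last letter of $f_j$, which by the parity lemma is $a$ precisely when $j$ is even and $b$ precisely when $j$ is odd; rephrasing this in terms of the number of trailing zeros $j$ gives exactly the proposition. Note that the conclusion is forced simultaneously for every valid representation of $n$, since the letter ${\bf f}[n]$ is a single fixed symbol determined by $n$ alone. I do not expect a serious obstacle here: the only real care needed is the bookkeeping, namely identifying the ``number of trailing zeros'' with the index $j$ of the lowest nonzero digit and excluding the degenerate case $n=0$ (empty representation, empty prefix). An alternative route through the Sturmian characterization \eqref{e:st} is available, but the direct last-letter argument above is cleaner and avoids fractional-part estimates.
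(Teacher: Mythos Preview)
Your proposal is correct and follows essentially the same approach as the paper: both arguments hinge on the single observation that $f_i$ ends with $a$ if and only if $i$ is even, and then read off the parity of the lowest nonzero digit from the last letter ${\bf f}[n]$ of the prefix. The paper's proof is simply a one-line version of yours, stating this observation without the explicit induction (and note that Lemma~\ref{l:l1} is not actually needed here, since the equality ${\bf f}(0..n]=f_n^{k_n}\cdots f_0^{k_0}$ is the \emph{definition} of a valid representation).
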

\begin{proof} 
It suffices to consult the definition of a valid representation and notice that $f_i$ ends with $a$ if and only if $i$ is even. 
\end{proof} 

We now state our main result.
 
\begin{theorem}\label{t:main} 
If ${\bf f}[n]=a$, then $V(n)=\lceil n/\varphi^2 \rceil$,
or, equivalently,
$V(n)$ is equal to the number of occurrences of $b$ in ${\bf f}(0..n]$, plus one. 
If ${\bf f}[n]=b$, then $V(n)=\lceil n/\varphi^3 \rceil$, or,
equivalently, $V(n)$ is equal to the number of occurrences of $aa$ in
${\bf f}(0..n]$, plus one.
\end{theorem}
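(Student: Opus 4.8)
The plan is to prove the two combinatorial descriptions of $V(n)$ (the number of $b$'s, respectively the number of occurrences of $aa$, plus one) by strong induction on $n$, and only afterwards to pass to the Beatty forms $\lceil n/\varphi^2\rceil$ and $\lceil n/\varphi^3\rceil$ via elementary identities for the golden ratio; I regard this last passage as routine and separate it from the main argument. The engine of the induction is de-substitution by the Fibonacci morphism $\mu$. Since $f_i=\mu(f_{i-1})$ and ${\bf f}=\mu({\bf f})$, applying $\mu$ to a factorization ${\bf f}(0..m]=f_{p}^{k_p}\cdots f_0^{k_0}$ yields ${\bf f}(0..g(m)]=f_{p+1}^{k_p}\cdots f_1^{k_0}$, where $g(m):=|\mu({\bf f}(0..m])|$ equals $m$ plus the number of $a$'s in ${\bf f}(0..m]$; that is, appending a $0$ to a valid representation of $m$ produces a valid representation of $g(m)$. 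The numbers of the form $g(m)$ are exactly the $\mu$-block boundaries, namely the positions $n$ with ${\bf f}[n]=b$ or with ${\bf f}[n]={\bf f}[n+1]=a$. I would first dispatch the easy case ${\bf f}[n]=b$: here $n$ is a boundary and, by Proposition \ref{p:ab}, every valid representation ends in a $0$, so stripping one trailing $0$ is a bijection onto the valid representations of the de-substituted index. This gives $V(n)=V(m)$ with $m$ the number of $a$'s in ${\bf f}(0..n]$, and the induction step for the $aa$-count formula then reduces to a bookkeeping identity relating occurrences of $aa$ ending before a $b$ to occurrences of $a$ in the de-substituted prefix.

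For ${\bf f}[n]=a$ I would classify valid representations by the exponent $k_0\in\{0,1,2\}$ of the final block $f_0=a$, the value $3$ being excluded because $aaa$ is not a factor of ${\bf f}$, and the parity constraint of Proposition \ref{p:ab} being then automatic. The cases $k_0=1$ and $k_0=2$ reduce cleanly: after deleting $a^{k_0}$ the remaining prefix ends at a boundary, so its factorizations into words of order $\geq 1$ are counted by $V$ at a smaller de-substituted index, the case $k_0=2$ being governed by an explicit local condition on ${\bf f}[n-1]$. When $n$ is itself a boundary, the case $k_0=0$ also reduces cleanly and contributes $V(m)$. The essential difficulty is $k_0=0$ at a \emph{non}-boundary position (those with ${\bf f}[n]=a$, ${\bf f}[n+1]=b$). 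Here $\mu$ fails to reflect prefixes---because $\mu(b)=a$ is a prefix of $\mu(a)=ab$---so the de-substituted word $\mu^{-1}({\bf f}(0..n])$ is \emph{not} a Fibonacci prefix but rather ${\bf f}(0..m]\,b$ for the appropriate $m$, and genuinely new valid representations appear that descend from none at a lower index. The smallest instance is $6=[200]_F$, i.e.\ ${\bf f}(0..6]=f_2^2$, whose de-substitution $f_1^2=abab$ is not a prefix of ${\bf f}$.

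Thus the heart of the proof, and the step I expect to be the main obstacle, is to count the factorizations of a word of the form ${\bf f}(0..m]\,b$ into a non-increasing sequence of standard words. Such a factorization must end in some $f_j$ with $j$ odd, which forces the last two letters to be $ab$; in particular the count is $0$ whenever ${\bf f}[m]=b$ (so that ${\bf f}(0..m]\,b$ ends in $bb$), and peeling off the final odd standard word leads either to a shorter word of the same shape or back to a genuine Fibonacci prefix. I would pin down this recursion using the return-word structure of ${\bf f}$, namely its decomposition into the blocks $ab$ and $aab$, which is exactly what controls when a square $f_j^2$ can be a suffix of a prefix. Once these ``spontaneous'' representations are counted, the $k_0=0,1,2$ contributions assemble into a recurrence for $V(n)$ in terms of $V$ at de-substituted indices. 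It then remains to check that the quantities ``number of $b$'s plus one'' and ``number of occurrences of $aa$ plus one'' obey these same recurrences together with the base cases $V(1)=V(2)=1$; this amounts to elementary identities for the counting functions of $a$, $b$, and $aa$ under one application of $\mu$, after which the equivalence with the ceiling expressions follows from the Sturmian description \eqref{e:st} and standard Beatty-sequence manipulations with $\varphi$.
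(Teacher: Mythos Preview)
Your route is genuinely different from the paper's. The paper does not classify factorizations by the exponent $k_0$ of $f_0$ and then de-substitute by~$\mu$; instead it sets up two recurrences keyed to the \emph{Zeckendorf} shape of $n$ (Propositions~\ref{p:00} and~\ref{p:01}), chosen precisely so that every de-substitution step lands back on a genuine Fibonacci prefix. With those recurrences in hand, the paper proves the Beatty form $V(n)=\lceil n/\varphi^2\rceil$ \emph{first}, by a direct induction that tracks fractional parts of $n/\varphi^2$, and only then reads off the ``number of $b$'s plus one'' interpretation from the Sturmian description~\eqref{e:st}; the $b$-case is reduced to the $a$-case via Proposition~\ref{p:23}. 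Your order (combinatorial description first, Beatty afterwards) is perfectly reasonable, and arguably more transparent, but the induction machinery you set up is heavier than it needs to be.

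The place where this shows is your ``hard case'': $k_0=0$ at a non-boundary position, where you propose to study factorizations of the non-prefix word ${\bf f}(0..m]\,b$. This auxiliary quantity is avoidable. At such a position one necessarily has $k_1=0$ as well (since $f_1$ ends in $b$), so the last block has order $\ge 2$; and because the word ends in $f_3f_0=abaaba$ while every $f_{2l}$ with $l\ge 2$ ends in $f_4=abaababa$, that last block is forced to be exactly $f_2$ (this is the observation in the proof of Proposition~\ref{p:01}). Peeling off $f_2$ drops you to a $\mu^2$-boundary, hence to a smaller Fibonacci prefix, and the auxiliary word ${\bf f}(0..m]\,b$ never enters. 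In the complementary sub-case (the word ending in $abaa$) your $k_0=0$ contribution is simply empty. So your plan is viable, but the return-word analysis you anticipate as ``the main obstacle'' can be replaced by this one-line suffix argument, which is exactly how the paper closes the loop.
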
 
 
To prove the theorem, we will need several more propositions. 
 
\begin{proposition}\label{p:10} 
\leavevmode
\begin{itemize}
\item[(a)]  $V( [r0]_F )\geq V( [r]_F )$ for all $r \in \{ 0,1 \}^*$.
\item[(b)]  For all $k\geq 0$ and all $r' \in \{ 0,1 \}^*$, we have $V( [r'10^{2k+1}]_F )= V( [r'10^{2k}]_F )$. 
\end{itemize}
\end{proposition}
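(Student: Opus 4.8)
The plan is to treat both parts through the action of the Fibonacci morphism $\mu$ on valid factorizations. The central observation I would establish first is that $\mu$ sets up a bijection between the valid representations of $[r]_F$ and those valid representations of $[r0]_F$ whose lowest digit is $0$. Indeed, if $\mathbf{f}(0..N] = f_n^{k_n}\cdots f_0^{k_0}$ is a valid representation of $N=[k_n\cdots k_0]_F$, then applying $\mu$ and using $\mu(f_i)=f_{i+1}$ together with the identity $\mu(\mathbf{f}(0..N])=\mathbf{f}(0..[k_n\cdots k_0 0]_F]$ from the previous section yields $\mathbf{f}(0..[r0]_F]=f_{n+1}^{k_n}\cdots f_1^{k_0}$, a valid representation of $[r0]_F$ whose lowest digit is $0$. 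Conversely, a valid representation of $[r0]_F$ with lowest digit $0$ uses only the words $f_1,f_2,\ldots$; since $\mu$ is injective on words, such a factorization is the $\mu$-image of a unique factorization of $\mathbf{f}(0..[r]_F]$ into standard words, hence of a unique valid representation of $[r]_F$. I would write out and verify this correspondence carefully, as its bijectivity is the technical core of the argument.

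Granting the bijection, part (a) is immediate. Writing $E$ for the number of valid representations of $[r0]_F$ whose lowest digit is \emph{nonzero}, the bijection gives
\[ V([r0]_F) = V([r]_F) + E, \]
and $E\geq 0$ proves $V([r0]_F)\geq V([r]_F)$.

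For part (b), I would apply this identity with $r=r'10^{2k}$, so that $[r0]_F=[r'10^{2k+1}]_F$ and $[r]_F=[r'10^{2k}]_F$; it then suffices to show that the corresponding $E$ vanishes, i.e. that \emph{no} valid representation of $N_1:=[r'10^{2k+1}]_F$ ends in a nonzero digit. This is where Proposition \ref{p:ab} does the work. The string $r'10^{2k+1}$ is itself a legal, hence by Lemma \ref{l:l1} valid, representation whose lowest nonzero coefficient sits at position $2k+1$, so $\mathbf{f}(0..N_1]$ ends with the standard word $f_{2k+1}$. Since $2k+1$ is odd, $f_{2k+1}$ ends in $b$, and therefore $\mathbf{f}[N_1]=b$. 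By Proposition \ref{p:ab}, every valid representation of $N_1$ then ends in an odd, and in particular positive, number of zeros, so none can end in a nonzero digit; thus $E=0$ and $V([r'10^{2k+1}]_F)=V([r'10^{2k}]_F)$.

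The main obstacle I anticipate is the rigorous justification of the bijection in the first paragraph, specifically checking that $\mu$ is injective on words (it is, since $\{\mu(a),\mu(b)\}=\{ab,a\}$ is a suffix code) and that pulling a factorization of $\mathbf{f}(0..[r0]_F]$ back through $\mu$ lands exactly on a factorization of $\mathbf{f}(0..[r]_F]$ into standard words. Once this correspondence is in place, both parts reduce to elementary bookkeeping together with a single application of Proposition \ref{p:ab}.
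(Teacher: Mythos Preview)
Your proposal is correct and follows essentially the same approach as the paper: both use the Fibonacci morphism $\mu$ to map valid factorizations of $\mathbf{f}(0..[r]_F]$ to those of $\mathbf{f}(0..[r0]_F]$ with trivial $f_0$-part, getting the inequality in (a), and then invoke Proposition~\ref{p:ab} (the last letter is $b$, so no factorization ends in $f_0$) to turn this into a bijection for (b). Your write-up is slightly more explicit about the injectivity of $\mu$ and frames the two parts through the single identity $V([r0]_F)=V([r]_F)+E$, but the substance is identical to the paper's proof.
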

\begin{proof} 
(a):  
Consider a factorization ${\bf f}(0..[r]_F ]=
f_n^{k_n} f_{n-1}^{k_{n-1}}\cdots f_0^{k_0}$. 
Applying the Fibonacci morphism $\mu$ to both sides,
we get the factorization ${\bf f}(0..[r0]_F ] =
f_{n+1}^{k_n} f_{n}^{k_{n-1}}\cdots f_1^{k_0}$.
So the number of factorizations of ${\bf f}(0.. [r0]_F ]$ 
(which is equal to $V( [r0]_F )$) is at least as large as
the number of factorizations of ${\bf f}(0.. [r]_F ]$ (which is
equal to $V( [r]_F )$). 
 
(b) If, in addition $r=r'10^{2k}$ for some $k\geq 0$, we see that 
${\bf f}(0..[r]_F ]$ ends with $f_{2k}$ and ${\bf f}(0..[r0]_F ]$ ends with $f_{2k+1}$,
which in turn ends with $b$. From Proposition \ref{p:ab},
no factorization of ${\bf f}(0..[r0]_F ]$ ends with $f_0$; that is, 
such a factorization must be of the form ${\bf f}(0..[r0]_F ] = f_{n+1}^{k_n} f_{n}^{k_{n-1}}\cdots f_1^{k_0}$.
Taking the $\mu$-preimage,
we get the factorization ${\bf f}(0..[r]_F ] =
f_n^{k_n} f_{n-1}^{k_{n-1}}\cdots f_0^{k_0}$,
thus establishing a bijection and the equality  $V( [r'10^{2k+1}]_F )= V( [r'10^{2k}]_F )$.
\end{proof} 
 
\begin{proposition}\label{p:00} 
We have 
\[V( [z10^{2k}]_F )=V( [ z10^{2k-2}]_F )+V( [z(01)^{k}]_F ).\] 
for all $z\in \{ 0,1 \}^* $ and all $k\geq 1$.
\end{proposition}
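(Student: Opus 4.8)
The plan is to count valid factorizations of the prefix ${\bf f}(0..[z10^{2k}]_F]$ by splitting them according to the last standard word that appears in the factorization. Since this prefix ends with $f_{2k}$ (because the representation ends in $10^{2k}$), Proposition \ref{p:ab} tells us that ${\bf f}[[z10^{2k}]_F]=a$, so every valid factorization ends in an even number of $0$s, i.e.\ the smallest standard word used has even index. I would partition the set of valid factorizations into two classes: those in which the final digit $k_0=0$ (equivalently, the factorization does not use $f_0$ at all and hence really lives one level up), and those in which the factorization genuinely reaches down to the lowest block. The term $V([z10^{2k-2}]_F)$ should account for one class and $V([z(01)^k]_F)$ for the other.

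Concretely, first I would observe that the factorizations of ${\bf f}(0..[z10^{2k}]_F]$ that end in $f_1$ (odd smallest index) are excluded by Proposition \ref{p:ab}, so every factorization ends with some power of an even-indexed standard word. The natural dichotomy is whether the trailing run of $0$s in the representation has length exactly $2k$ or strictly less. If the factorization can be viewed as genuinely ending at level $2k$ with $k_0\ge 1$, then stripping the action of $\mu$ twice (using the morphism identity $f_{n+2}=\mu^2(f_n)$ displayed in the Result section) should set up a bijection with factorizations of a shorter prefix, accounting for the $V([z10^{2k-2}]_F)$ term. I would use part (a) of Proposition \ref{p:10} and its $\mu$-preimage argument as the template for making this bijection rigorous.

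The more delicate term is $V([z(01)^k]_F)$. The idea is that the remaining valid factorizations — those for which the trailing structure is ``shifted down'' — correspond, via the transformation $\cdots 10^{2k}\cdots \longleftrightarrow \cdots (01)^k\cdots$ of the type \eqref{e:k0l}, to valid representations whose tail is $(01)^k$ rather than $10^{2k}$. I would check that applying the defining transformation $k0l \leftrightarrow (k-1)1(l+1)$ repeatedly to the suffix $10^{2k}$ produces exactly $(01)^k$, and that this converts the two digit-strings into factorizations of the \emph{same} prefix ${\bf f}(0..N]$ (since \eqref{e:k0l} preserves the represented integer and, by Theorem~1 of \cite{frid}, preserves validity). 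The key combinatorial claim is that every valid factorization of the prefix falls into exactly one of the two classes, with no overlap and no omission, so that the counts add.

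The main obstacle I anticipate is establishing that the two classes are genuinely disjoint and jointly exhaustive, and in particular pinning down the precise bijection for the $V([z(01)^k]_F)$ term: one must verify that factorizations counted by $V([z(01)^k]_F)$ are exactly those that do \emph{not} arise as $\mu^2$-images from level $2k-2$, and that the transformation between the $10^{2k}$ and $(01)^k$ tails is reversible and stays within valid representations at every intermediate step. Handling the boundary case $k=1$ (where $10^{2k-2}=10^0$ degenerates) and controlling how the prefix $z$ interacts with the rewritten tail will require care, but the structure of the argument should parallel the $\mu$-preimage bookkeeping already used in Proposition \ref{p:10}.
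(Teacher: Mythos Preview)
Your high-level partition (split valid factorizations of ${\bf f}(0..[z10^{2k}]_F]$ according to whether $k_0=0$ or $k_0\neq 0$) is exactly the paper's, but you have swapped which case feeds which term, and your mechanism for the $V([z(01)^k]_F)$ term does not work.

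First, the $\mu^2$-preimage bijection applies to the case $k_0=0$, not $k_0\ge 1$: if $k_0=0$ then also $k_1=0$ (since $f_1$ ends in $b$ while the prefix ends in $a$), so the factorization is $f_n^{k_n}\cdots f_2^{k_2}$ and taking $\mu^{-2}$ gives a factorization of ${\bf f}(0..[z10^{2k-2}]_F]$. When $k_0\ge 1$ the factorization uses $f_0=a$, which is not in the image of $\mu$, so you cannot strip $\mu^2$ there.

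Second, your proposal to link the remaining factorizations to $V([z(01)^k]_F)$ via the rewriting~\eqref{e:k0l} cannot succeed as stated: $[z10^{2k}]_F$ and $[z(01)^k]_F$ are \emph{different} integers (already for $z=\epsilon$, $k=1$ they are $3$ and $1$), and \eqref{e:k0l} is value-preserving, so it never carries factorizations of one prefix to factorizations of the other. The missing idea is arithmetic, not a rewriting: when $k_0\neq 0$ one first argues $k_0=1$ (the prefix ends in $aba$, so two consecutive copies of $f_0$ are impossible at the end), then deletes that final $f_0$ to obtain the prefix of length $[z10^{2k}]_F-1$. The identity $F_{2k+2}-1=F_{2k+1}+F_{2k-1}+\cdots+F_3$ gives $[z10^{2k}]_F-1=[z(01)^k0]_F$, and Proposition~\ref{p:10}(b) then yields $V([z(01)^k0]_F)=V([z(01)^k]_F)$. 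That is the bijection you need for the second term.
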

\begin{proof}
Proposition \ref{p:ab} tells us that
${\bf f}[ [z10^{2k}]_F ]=a$, and moreover, since $k>0$,
the prefix of length $ [z10^{2k}]_F $ of ${\bf f}$ ends with $aba$,
which is a suffix of $f_{2k}$.
Consider a valid factorization ${\bf f}(0.. [z10^{2k}]_F] = 
f_n^{k_n} f_{n-1}^{k_{n-1}}\cdots f_0^{k_0}$.
If $k_0=0$, then $k_1=0$ since $f_1$ ends with $b$,
so the factorization is of the form
$f_n^{k_n} f_{n-1}^{k_{n-1}}\cdots f_2^{k_2}$.
Taking the $\mu^2$-preimage,
we get a factorization $f_{n-2}^{k_n} f_{n-3}^{k_{n-1}}\cdots f_0^{k_2}$ 
of ${\bf f}(0.. [ z10^{2k-2}]_F] $. Moreover, $\mu^2$ is a bijection between all the factorizations of 
${\bf f}(0.. [ z10^{2k-2}]_F] $ and the factorizations of ${\bf f}(0.. [ z10^{2k}]_F] $ with $k_0=k_1=0$.  
 
On the other hand, if $k_0\neq 0$, then $k_0=1$ since the word that we factor
ends with $aba$. Removing this last occurrence of $f_0=a$, we get the prefix of ${\bf f}$ of length $[ z10^{2k}]_F - 1= [z(01)^k0]_F $. 
From Proposition~\ref{p:10},
the number of valid factorizations of ${\bf f}(0.. [z(01)^k0]_F]$
is equal to that of ${\bf f}(0..  [z(01)^k]_F] $.
Combining the two possibilities, we get the statement of the proposition. 
\end{proof} 
 
\begin{proposition}\label{p:01} 
 For all $z \in \{ 0, 1 \}^* $ and for all $k\geq 1$, we have
\[V( [z10^{k}1]_F )=
\begin{cases}
V( [z10^{k+1}]_F ), & \mbox{~if~} k \mbox{~is odd};\\
V( [z10^k]_F )+V( [z(01)^{k/2}]_F ), &
	\mbox{~if~} k \mbox{~is even.} 
\end{cases}
\] 
\end{proposition}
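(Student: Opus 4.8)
The plan is to compare the prefix $P := {\bf f}(0..[z10^k1]_F]$ with the shorter prefix $P' := {\bf f}(0..[z10^{k+1}]_F]$. First I would record the value identity $[z10^k1]_F = [z10^{k+1}]_F + 1$ (the two numbers agree except in the $F_2$-digit), so that $P$ is $P'$ followed by a single extra letter. Since $[z10^k1]_F$ has no trailing zero, Proposition \ref{p:ab} gives ${\bf f}[[z10^k1]_F]=a$, so that extra letter is $a=f_0$. I would then split the valid factorizations of $P$ according to whether the last factor is $f_0$ (that is, $k_0\ge 1$) or a standard word of order at least $2$ (the case $k_0=0$, which forces $k_1=0$ because $f_1$ ends in $b$ while $P$ ends in $a$).

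The main step is a bijection for the first group: deleting one trailing $f_0$ sends a factorization of $P$ with $k_0\ge 1$ to a factorization of $P'$, and appending one $f_0$ is the inverse. What makes this a bijection onto \emph{all} factorizations of $P'$ is that $P'$ never ends in $aa$: by Lemma \ref{l:l1}, $P'$ ends in $f_{k+1}$, which ends in $b$ when $k$ is even and in $ba$ when $k$ is odd. Hence every factorization of $P'$ has $k_0\le 1$, and appending an $f_0$ produces a run of at most two $a$'s, as required for a factor of ${\bf f}$. This yields the unified identity $V([z10^k1]_F) = V([z10^{k+1}]_F) + V_{00}(P)$, where $V_{00}(P)$ counts the valid factorizations of $P$ using only standard words of order $\ge 2$. (Note also that $P$ itself then ends in $ba$ when $k$ is even and in $aa$ when $k$ is odd.)

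It remains to evaluate $V_{00}(P)$ by parity. When $k$ is odd, $P$ ends in $aa$, whereas every standard word $f_i$ with $i\ge 2$ ends in $ab$ or $ba$; no concatenation of such words can end in $aa$, so $V_{00}(P)=0$ and we obtain $V([z10^k1]_F)=V([z10^{k+1}]_F)$, exactly the odd case. When $k$ is even, I would first apply Proposition \ref{p:10}(b): since $[z10^{k+1}]_F$ has an odd number $k+1$ of trailing zeros, $V([z10^{k+1}]_F)=V([z10^{k}]_F)$, which is the first summand of the claimed formula. The remaining task is to show $V_{00}(P)=V([z(01)^{k/2}]_F)$.

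For this last equality I would pass to the preimage under $\mu^2$. Recalling $f_{i+1}=\mu(f_i)$, so that $\mu^2(f_i)=f_{i+2}$, and using injectivity of $\mu^2$ on factorizations, the factorizations of $P$ into words of order $\ge 2$ correspond exactly to the factorizations into standard words of the word $w:=\mu^{-2}(P)$. I would identify $w = {\bf f}(0..[z(01)^{k/2}]_F]\,a$ by checking $\mu^2({\bf f}(0..[z(01)^{k/2}]_F]\,a)=P$; since applying $\mu^2$ appends two zeros to a representation and $\mu^2(a)=aba$, this reduces to the value identity $[z(01)^{k/2}00]_F+3=[z10^k1]_F$ together with a short local check that ${\bf f}$ reads $aba$ at the three positions following $[z(01)^{k/2}00]_F$. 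The subtlety — and the step I expect to be the main obstacle — is that $w$ is \emph{not} a prefix of ${\bf f}$: it ends in $aaa$, so its factorization count is not literally a value $V(\cdot)$. To finish, I would observe that $w$ ends in $aa$, so every standard factorization of $w$ ends in $f_0$; deleting that $f_0$ gives a bijection with the factorizations of the genuine prefix ${\bf f}(0..[z(01)^{k/2}]_F]$, the append direction being legitimate because this prefix has maximal run of $a$'s equal to $2$, so the extra $f_0$ yields a run of at most $3$, which $w$ accommodates. This gives $V_{00}(P)=V([z(01)^{k/2}]_F)$ and completes the even case.
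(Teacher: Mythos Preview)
Your argument is correct and follows essentially the paper's strategy: split the factorizations of $P={\bf f}(0..[z10^k1]_F]$ according to whether the last factor is $f_0$, match the $k_0\ge 1$ group bijectively with all factorizations of $P'={\bf f}(0..[z10^{k+1}]_F]$, and in the even case reduce the remaining $k_0=k_1=0$ group to $V([z(01)^{k/2}]_F)$ via $\mu^{-2}$. The only structural difference is the order of operations in the even case. The paper first peels off the trailing $f_2$ from $P$ (the last factor in the $k_0=0$ group must be $f_2$ since $P$ ends in $abaaba$), landing on the genuine prefix ${\bf f}(0..[z(01)^{k/2}00]_F]$, and \emph{then} applies $\mu^{-2}$; you apply $\mu^{-2}$ to $P$ itself, obtaining the non-prefix $w={\bf f}(0..[z(01)^{k/2}]_F]\cdot a$, and then peel off a trailing $f_0$. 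Both orders give the same answer; the paper's order avoids the detour through a word that is not a prefix of ${\bf f}$ and so sidesteps the ``subtlety'' you flag.

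One simplification: your checks involving ``runs of $a$'s'' are unnecessary. A factorization here is nothing more than a tuple $(k_n,\ldots,k_0)$ of nonnegative integers with $f_n^{k_n}\cdots f_0^{k_0}$ equal to the target word; there is no separate upper bound on the $k_i$ to enforce. Thus appending one $f_0$ to any factorization of $P'$ automatically yields a factorization of $P=P'\cdot a$, and likewise for $w$; the inverse is just deletion of one trailing $f_0$. The bound $k_i\le 3$ mentioned in the paper is a \emph{consequence} of being a factor of ${\bf f}$, not an additional requirement. So the bijections you set up are immediate and do not need the run-length justifications.
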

\begin{proof}
If $k$ is odd, then $[ z10^{k}1]_F = [z10^{k+1}]_F +1$,
and the prefix ${\bf f}(0.. [z10^{k+1}]_F ]$ was considered
in the previous proposition.
It ends with $aba$, and the symbol added to get
${\bf f}(0.. [z10^{k}1]_F ]$ is also $a$.
So ${\bf f}(0.. [z10^{k}1]_F ]$ ends with $abaa$, and all valid factorizations
end with $f_0$.
This means that the number of valid factorizations of 
${\bf f}(0.. [z10^{k}1]_F ]$ is equal to that of ${\bf f}[0..[z10^{k+1}]_F ]$;
that is, $V( [z10^{k}1]_F )=V( [z10^{k+1}]_F )$. 
 
If $k$ is even, $k>0$, then ${\bf f}(0.. [z10^{k}1]_F ]$ ends with
$f_3f_0=abaaba$.
In particular, the last factor of any valid factorization of 
${\bf f}(0.. [z10^{k}1]_F ]$ is either $f_0=a$, or $f_2=aba$.
Indeed, $f_4=abaababa$ and thus
for all $l > 2$ the $f_{2l}$ do not have a common suffix with
${\bf f}(0.. [z10^{k}1]_F ]$. 
So, letting $V_2(n)$ denote the number of factorizations
of ${\bf f }(0..n]$ of the form $f_n^{k_n} f_{n-1}^{k_{n-1}}\cdots f_2^{k_2}$, we get  
\begin{align*} 
 V( [z10^{k}1]_F )&=V( [z10^{k}1]_F -1)+V_2( [z10^{k}1]_F -3)\\ 
&= V( [z10^{k+1}]_F )+V_2( [z(01)^{k/2}00]_F )\\ 
&= V( [z10^{k}]_F )+V( [z(01)^{k/2}]_F ). 
\end{align*} 
Here the last equality follows from Proposition \ref{p:10} (for the first addend) and by taking $\mu^{-2}$ of each factorization (for the second one). 
\end{proof} 
 
Propositions \ref{p:10} to \ref{p:01} give a full list of recurrence relations sufficient to compute $V(n)$ for every $n>1$, starting from $V(1)=1$. Before using them to prove the main theorem, we consider two particular cases. 
 
\begin{corollary} 
For all $k \geq 1$ we have
\[V(F_{2k+1}-1)=V(F_{2k+1}-2)=F_{2k-1}\] 
and 
\[V(F_{2k+2}-2)=F_{2k}\] 
\end{corollary}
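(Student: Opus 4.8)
The plan is to rewrite each of the three integers in the corollary as a Fibonacci-system string, feed those strings into the recurrences of Propositions~\ref{p:00} and~\ref{p:01}, and thereby obtain a small coupled recurrence that I can solve by a simultaneous induction on $k$.

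First I would record the relevant string representations. Using the standard identities $\sum_{j=1}^{n}F_{2j}=F_{2n+1}-1$ and $\sum_{j=1}^{n}F_{2j-1}=F_{2n}$, a direct computation gives $F_{2k+1}-1=[(10)^{k-1}1]_F$, then $F_{2k+1}-2=[(10)^{k-2}100]_F$ for $k\ge 2$, and $F_{2k+2}-2=[(10)^{k-1}01]_F$. Since $V$ is a function of the represented integer alone, I may read each number in whichever of these forms is convenient. Abbreviate $a_k=V(F_{2k+1}-1)$ and $c_k=V(F_{2k+2}-2)$.

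The equality $V(F_{2k+1}-1)=V(F_{2k+1}-2)$ is essentially free: writing $(10)^{k-1}1=(10)^{k-2}10^{1}1$ and applying the odd branch of Proposition~\ref{p:01} (exponent $1$) rewrites $[(10)^{k-2}10^{1}1]_F$ as $[(10)^{k-2}10^{2}]_F=[(10)^{k-2}100]_F=F_{2k+1}-2$, while the degenerate case $k=1$ reduces to $V(1)=V(0)=1$ by inspection. For the two recurrences I would proceed similarly. Reading $a_k$ as $V([(10)^{k-2}10^{2}]_F)$ and applying Proposition~\ref{p:00} (exponent $2$) splits it as $V([(10)^{k-2}1]_F)+V([(10)^{k-2}01]_F)=a_{k-1}+c_{k-1}$. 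Reading $c_k$ as $V([(10)^{k-2}10^{2}1]_F)$ and applying the even branch of Proposition~\ref{p:01} (exponent $2$) splits it as $V([(10)^{k-2}10^{2}]_F)+V([(10)^{k-2}01]_F)=a_k+c_{k-1}$. Thus $a_k=a_{k-1}+c_{k-1}$ and $c_k=a_k+c_{k-1}$ for $k\ge 2$.

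With the base values $a_1=V(1)=1=F_1$ and $c_1=V(1)=1=F_2$, a simultaneous induction finishes the proof: assuming $a_{k-1}=F_{2k-3}$ and $c_{k-1}=F_{2k-2}$ gives $a_k=F_{2k-3}+F_{2k-2}=F_{2k-1}$ and then $c_k=F_{2k-1}+F_{2k-2}=F_{2k}$; combined with the equality of the first two quantities, this yields all three claims. The only real obstacle I foresee is bookkeeping rather than anything conceptual: one must match the trailing block of each string precisely to the shapes $[z10^{2k}]_F$ and $[z10^{k}1]_F$ demanded by the propositions, track the parity of each exponent (which selects the branch of Proposition~\ref{p:01}), and check the small cases $k=1,2$ where the prefix $(10)^{k-2}$ is empty and where the convention $V(0)=1$ must be invoked.
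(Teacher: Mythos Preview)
Your proposal is correct and is essentially the paper's own argument: the same string representations $F_{2k+1}-1=[(10)^{k-1}1]_F$, $F_{2k+1}-2=[(10)^{k-1}0]_F=[(10)^{k-2}100]_F$, $F_{2k+2}-2=[(10)^{k-1}01]_F$ are fed into Propositions~\ref{p:00} and~\ref{p:01} to produce the coupled recurrences $a_k=a_{k-1}+c_{k-1}$ and $c_k=a_k+c_{k-1}$, which are then resolved by simultaneous induction from the base values at $k=1$. The only cosmetic difference is that you run the induction from $k-1$ to $k$ while the paper writes it from $k$ to $k+1$.
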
 
\begin{proof}
For $k=1$, the equalities can be easily checked:
$V(F_3-1)=V(1)=V(F_3-2)=V(0)=1=F_1$, and $V(F_4-2)=V(1)=1=F_2$.
We also observe that $F_{2k+1}-1= [(10)^{k-1} 1]_F$,
$F_{2k+1}-2= [(10)^{k-1} 0]_F $, and
$F_{2k+2}-2= [(10)^{k-1} 01]_F $.
Now we assume that the equalities hold for $k$, and
use Propositions \ref{p:00} and \ref{p:01} to prove they hold for $k+1$: 
\begin{align*} 
 V(F_{2k+3}-2)&=V( [(10)^k0]_F )=V( [ (10)^{k-1} 1]_F )+V( [(10)^{k-1} 01]_F )\\ 
&=V(F_{2k+1}-1)+V(F_{2k+2}-2)=F_{2k-1}+F_{2k}=F_{2k+1},\\ 
V(F_{2k+3}-1)&=V( [ (10)^{k}1]_F )=
V( [(10)^{k}0]_F )=V(F_{2k+3}-2)=F_{2k+1},\\ 
V(F_{2k+4}-2)&=V( [ (10)^{k}01]_F )=
V( [ (10)^{k}0]_F ) + 
V( [(10)^{k-1} 01]_F  )\\ 
&=V(F_{2k+3}-2)+V(F_{2k+2}-2)=F_{2k+1}+F_{2k}=F_{2k+2}. 
\end{align*}
\end{proof}

\begin{corollary} 
For all $k\geq 1$, we have
\[V(F_{2k})=V(F_{2k+1})=F_{2k-2}+1.\] 
\end{corollary}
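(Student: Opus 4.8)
The plan is to reduce both equalities to the recurrences already in hand, so the first thing I would do is pin down the Zeckendorf representations of the two arguments. Since a single Fibonacci number $F_m$ occupies position $m-2$, we have $F_{2k}=[10^{2k-2}]_F$ and $F_{2k+1}=[10^{2k-1}]_F$. With these in place the two claims separate cleanly: one asserts that the two values agree, the other pins down their common value.

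The equality $V(F_{2k})=V(F_{2k+1})$ is then immediate. Writing $2k-1=2(k-1)+1$ exhibits the second argument as an \emph{odd} run of trailing zeros after the digit $1$, so Proposition \ref{p:10}(b) with $r'=\epsilon$ gives $V([10^{2(k-1)+1}]_F)=V([10^{2(k-1)}]_F)$, i.e.\ $V(F_{2k+1})=V(F_{2k})$ for every $k\geq 1$.

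For the value I would set up a recurrence in $k$ using Proposition \ref{p:00} with $z=\epsilon$, which gives
\[V(F_{2k+2})=V([10^{2k}]_F)=V([10^{2k-2}]_F)+V([(01)^{k}]_F)=V(F_{2k})+V([(01)^{k}]_F).\]
The only remaining task is to evaluate $V([(01)^{k}]_F)$. Here $[(01)^{k}]_F=F_2+F_4+\cdots+F_{2k}=F_{2k+1}-1$ by the standard identity for sums of even-indexed Fibonacci numbers, so the previous corollary yields $V([(01)^{k}]_F)=V(F_{2k+1}-1)=F_{2k-1}$. This produces the recurrence
\[V(F_{2k+2})=V(F_{2k})+F_{2k-1}\qquad(k\geq 1).\]
I would then finish by induction on $k$: the base case is $V(F_2)=V(1)=1=F_0+1$, and the inductive step uses $F_{2k-2}+F_{2k-1}=F_{2k}$ to pass from $V(F_{2k})=F_{2k-2}+1$ to $V(F_{2k+2})=F_{2k}+1$.

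Every individual step here is routine once the representations are fixed; the one place that genuinely demands care is the index bookkeeping — in particular, correctly recognizing $[(01)^{k}]_F$ as $F_{2k+1}-1$ and matching it against the right instance of the previous corollary. That is where an off-by-one error is easiest to slip in, so I would verify the small cases $k=1,2$ explicitly as a sanity check before committing to the general argument.
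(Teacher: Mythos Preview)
Your proof is correct and follows essentially the same route as the paper: the equality $V(F_{2k+1})=V(F_{2k})$ via Proposition~\ref{p:10}(b), the recurrence $V(F_{2k+2})=V(F_{2k})+V([(01)^k]_F)$ from Proposition~\ref{p:00} with $z=\epsilon$, and the evaluation of the extra term through the preceding corollary (the paper writes it as $V([(10)^{k-1}1]_F)$, which is the same number as your $[(01)^k]_F=F_{2k+1}-1$). The only cosmetic difference is that the paper runs the induction on both equalities simultaneously rather than separating them.
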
 
\begin{proof}
For $k=1$, the equalities can be easily checked: $V(F_2)=V(1)=V(F_3)=V(2)=1=F_0+1$. Suppose the equalities hold for $k$; let us prove them for $k+1$. With Proposition \ref{p:00}, we have 
\begin{displaymath}
 V(F_{2k+2})=
 V( [ 10^{2k}]_F )=
 V([10^{2k-2}]_F )+V( [(10)^{k-1} 1]_F )=F_{2k-2}+1+F_{2k-1}=F_{2k}+1, 
\end{displaymath}
and with Proposition \ref{p:10}, we have 
\[V(F_{2k+3})=V( [10^{2k+1}]_F )=V( [10^{2k}]_F )=V(F_{2k+2})=F_{2k}+1.  \]
\end{proof}

\begin{proposition}\label{p:23} 
Let $n= [z]_F $ and $n'= [z0]_F $ be such that ${\bf f}[n]=a$.
Then $\lceil n/\varphi^2 \rceil = \lceil n'/\varphi^3 \rceil$.  
\end{proposition}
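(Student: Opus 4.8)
The plan is to express $n'$ arithmetically in terms of $n$ and then reduce the claimed equality of ceilings to a one-line estimate on a fractional part, with the hypothesis ${\bf f}[n]=a$ entering at exactly one point.

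First I would record what appending a $0$ does. Recall from the discussion of the Fibonacci morphism $\mu$ that appending a $0$ to a representation corresponds to applying $\mu$ to the associated prefix, so ${\bf f}(0..n']=\mu({\bf f}(0..n])$. Since $\mu(a)=ab$ and $\mu(b)=a$, the length of the image is $n'=2A+B$, where $A$ and $B$ denote the numbers of occurrences of $a$ and of $b$ in ${\bf f}(0..n]$; as $A+B=n$ this gives $n'=2n-B$. Next I would compute $B$ from the Sturmian description \eqref{e:st}, which shows that ${\bf f}[j]=b$ holds exactly when $\lfloor (j+1)/\varphi^2\rfloor=\lfloor j/\varphi^2\rfloor+1$, so that summing over $j$ telescopes to $B=\lfloor (n+1)/\varphi^2\rfloor$. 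This is also where the hypothesis is used: \eqref{e:st} says that ${\bf f}[n]=a$ is equivalent to $\{n/\varphi^2\}<1-1/\varphi^2=1/\varphi$, which is in turn equivalent to $\lfloor (n+1)/\varphi^2\rfloor=\lfloor n/\varphi^2\rfloor$. Hence under our hypothesis $B=\lfloor n/\varphi^2\rfloor$ and $\{n/\varphi^2\}\in(0,1/\varphi)$.

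The key computation uses the identity $\varphi+\varphi^{-2}=2$, equivalently $n\varphi=2n-n/\varphi^2$. Setting $\delta:=n'-n\varphi$, I would then obtain
\[ \delta = (2n-B)-(2n-n/\varphi^2)=n/\varphi^2-\lfloor n/\varphi^2\rfloor=\{n/\varphi^2\}. \]
Dividing $n'=n\varphi+\delta$ by $\varphi^3$ yields
\[ \frac{n'}{\varphi^3}=\frac{n}{\varphi^2}+\frac{\{n/\varphi^2\}}{\varphi^3}=\Big\lfloor \frac{n}{\varphi^2}\Big\rfloor+\{n/\varphi^2\}\Big(1+\frac{1}{\varphi^3}\Big). \]
To finish I would bound the last summand: since $0<\{n/\varphi^2\}<1/\varphi$ and $(1/\varphi)(1+\varphi^{-3})=1/\varphi+1/\varphi^4<1$ (indeed $1<(2-\varphi)(3\varphi+2)=\varphi^2$), the quantity $\{n/\varphi^2\}(1+\varphi^{-3})$ lies strictly in $(0,1)$. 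Therefore $\lceil n'/\varphi^3\rceil=\lfloor n/\varphi^2\rfloor+1$, which equals $\lceil n/\varphi^2\rceil$ because $n/\varphi^2$ is irrational.

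I expect the main obstacle to be pinning down $\delta$ exactly, that is, proving $\delta=\{n/\varphi^2\}$ (a positive number) rather than $\{n/\varphi^2\}-1$; this is precisely where ${\bf f}[n]=a$ is indispensable, since for ${\bf f}[n]=b$ one instead gets $\delta<0$ and the two ceilings differ. The rest is bookkeeping: verifying the golden-ratio identities ($\varphi^{-2}=2-\varphi$ and $1/\varphi+1/\varphi^4<1$), the telescoping count of $b$'s, and the irrationality of $n/\varphi^2$ so that the floors and ceilings behave as stated.
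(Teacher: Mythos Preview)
Your proof is correct, and it takes a genuinely different route from the paper's.

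The paper expands $n=\sum_i F_{m_i}$ and $n'=\sum_i F_{m_i+1}$ via Binet's formula $F_k=(\varphi^k-\psi^k)/\sqrt{5}$, obtains $n'/\varphi-n=\frac{1}{\sqrt{5}}(1-\psi/\varphi)\sum_i\psi^{m_i}$, and then bounds the alternating sum $\sum_i\psi^{m_i}$ (using that $m_1$ is even, which encodes ${\bf f}[n]=a$) to get $0<n'/\varphi^3-n/\varphi^2<1/\varphi^4$. Your argument avoids Binet entirely: you use the combinatorial identity $n'=|\mu({\bf f}(0..n])|=2n-B$ together with the Sturmian count $B=\lfloor n/\varphi^2\rfloor$ (valid precisely when ${\bf f}[n]=a$) and the golden-ratio identity $2-\varphi=1/\varphi^2$ to obtain the \emph{exact} relation $n'-n\varphi=\{n/\varphi^2\}$, from which the ceiling equality drops out immediately. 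Your approach is shorter and yields a sharper intermediate statement (an equality rather than a two-sided estimate), and it ties the proposition more transparently to the Sturmian structure already used elsewhere in the paper; the paper's Binet computation, on the other hand, stays at the level of the numeration system and would port more directly to other Ostrowski expansions.
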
 
\begin{proof}
Let us write the canonical Fibonacci representation of
$n$ as $\sum_{1\leq i \leq l} F_{m_i}$, where $2\leq m_1<m_2<\cdots<m_l$.
Since ${\bf f}[n]=a$, from Proposition \ref{p:ab} we get that $m_1$ is even.  
 
Now $F_k=\frac{1}{\sqrt{5}}(\varphi^{k}-\psi^{k})$,
where $\psi=\frac{1-\sqrt{5}}{2}$, $-1<\psi<0$.
So 
$$n=\sum_{1\leq i \leq l} F_{m_i}=\frac{1}{\sqrt{5}} \left(\sum_{1\leq i \leq l} \varphi^{m_i} - \sum_{1\leq i \leq l} \psi^{m_i} \right)$$
and
$$n'=\sum_{1\leq i \leq l} F_{m_i+1}=\frac{1}{\sqrt{5}} \left(\sum_{1\leq i \leq l} \varphi^{m_i+1} - \sum_{1\leq i \leq l} \psi^{m_i+1}\right),$$
implying that  
\[\frac{n'}{\varphi}= \frac{1}{\sqrt{5}} \left (\sum_{1\leq i \leq l} \varphi^{m_i} - \frac{1}{\varphi}\sum_{1\leq i \leq l} \psi^{m_i+1}\right ).\] 
The difference between the two values is 
\[\frac{n'}{\varphi}-n=\frac{1}{\sqrt{5}}\left( 1-\frac{\psi}{\varphi}\right ) S,\] 
where 
$$S=\sum_{1\leq i \leq l} \psi^{m_i}=\psi^{m_1}\sum_{1\leq i \leq l} \psi^{m_i-m_1}.$$
 
Let us estimate $S$. Since $m_1\geq 2$, $m_1$ is even and $0<\psi^{m_1}<\psi^2$, an 
upper bound for $S$ is 
\[S<\psi^{m_1}\sum_{k=0}^{\infty} \psi^{2k}=\frac{\psi^{m_1}}{1-\psi^2}\leq \frac{\psi^2}{1-\psi^2},\] 
whereas a lower bound is  
\[S>\psi^{m_1}\left(1+\sum_{k=1}^{\infty} \psi^{2k+1}\right )>\psi^{m_1}\left(1+\sum_{k=0}^{\infty} \psi^{2k+1}\right )=\psi^{m_1}\left( 1+\frac{\psi}{1-\psi^2} \right) = 0.\] 
So
\[0<\frac{n'}{\varphi}-n<\frac{\psi^2}{\sqrt{5}}\left( 1-\frac{\psi}{\varphi}\right )\frac{1}{1-\psi^2}=\frac{1}{\varphi^2}.\] 
Dividing by $\varphi^2$, we get
\[0<\frac{n'}{\varphi^3}-\frac{n}{\varphi^2}<\frac{1}{\varphi^4}<\frac{1}{\varphi^2}.\]
Together with \eqref{e:st}, meaning that $\{n/\varphi^2\}<1-1/\varphi^2$, the last inequality implies the statement of the Proposition. 
\end{proof} 
 
\begin{proof}[Proof of Theorem \ref{t:main}.] Let us start with the case of $
{\bf f}[n]=a$ and proceed by induction starting with $V(1)=1$. For $n>1$, there are three subcases:
\begin{itemize}
\item[(a)] $n= [z10^{2k}]_F $, $k>0$;
\item[(b)] $n= [z10^{k}1]_F $, $k$ odd;
\item[(c)] $n= [ z10^{k}1]_F $, $k$ even. 
\end{itemize}
From now on we suppose that the statement of the theorem holds for all $n',n''<n$. 

\noindent (a) Since $n= [z10^{2k}]_F$ and $k>0$, Proposition \ref{p:00} gives 
$V(n)=V( [z10^{2k}]_F ) = V( [ z10^{2k-2}]_F )+
V( [ z(01)^{k}]_F )$. 
Write $ [z10^{2k-2}]_F =n'$ and $ [z(01)^{k}]_F =n''$.
Note that Proposition \ref{p:ab} gives ${\bf f}[n']={\bf f}[n'']=a$.
At the same time, $n''+1= [z10^{2k-1}]_F $ and thus ${\bf f}[n''+1]=b$.
Now \eqref{e:st} implies that $\{n'/\varphi^2\} \in (0,1-1/\varphi^2)$
and $\{(n''+1)/\varphi^2\}\in (1-1/\varphi^2,1)$.
Also, the Fibonacci representation of $n'$ is obtained from that of
$n''+1$ by a one-symbol shift to the left. So, summing up $n'$ and $n''+1$, 
due to the Fibonacci recurrence relation, we get the number with the same representation but shifted to the left yet another position, meaning that $n'+n''+1=n$.  
 
Let us consider the sum $t = \{n'/\varphi^2\}+\{(n''+1)/\varphi^2\}$.
From the inclusions above, we see that $t$ belongs to the interval $(1-1/\varphi^2,2-1/\varphi^2)$. But we also know that $\{n/\varphi^2\}=\{(n'+n''+1)/\varphi^2\}\in (0,1-1/\varphi^2)$, since ${\bf f}[n]=a$. So  
\[\{n/\varphi^2\}=\{n'/\varphi^2\}+\{(n''+1)/\varphi^2\}-1,\]  
which is equivalent to 
$\lfloor n/\varphi^2 \rfloor=\lfloor n'/\varphi^2 \rfloor+\lfloor (n''+1)/\varphi^2\rfloor +1$ and to 
$\lfloor n/\varphi^2 \rfloor=\lfloor n'/\varphi^2 \rfloor+\lfloor n''/\varphi^2\rfloor +1$ (since $\lfloor n''/\varphi^2\rfloor=\lfloor (n''+1)/\varphi^2\rfloor$). Since all the numbers under consideration are irrational,
and thus every ceiling is just the floor plus 1, we get
\[\lceil n/\varphi^2 \rceil=\lceil n'/\varphi^2 \rceil+\lceil n''/\varphi^2\rceil.\] 
To establish the statement of the theorem for this subcase, it is sufficient to use Proposition \ref{p:00} and the induction hypothesis: $V(n')=\lceil n'/\varphi^2 \rceil$ and $V(n'')=\lceil n''/\varphi^2 \rceil$.  

\bigskip
 
\noindent (b):  Here $n= [z10^{2k-1}1]_F $ and $k>0$.
It suffices to refer to the previous subcase and to 
Proposition \ref{p:01}:
$V(n)=V(n-1)=V( [z10^{2k}]_F )=\lceil (n-1)/\varphi^2 \rceil$.
It remains to notice that $\lceil (n-1)/\varphi^2 \rceil=\lceil n/\varphi^2 \rceil$, since ${\bf f}[n-1]=a$. 

\bigskip
 
\noindent (c):  Here $n= [z10^{2k}1]_F $ and $k>0$.
We use Proposition \ref{p:01}: $V( [z10^{2k}1]_F )=V( [z10^{2k}]_F )+
V( [z(01)^{k}]_F )$. As above, write $n'= [z10^{2k}]_F $ and
$n''= [z(01)^{k}]_F $; then $n=n'+n''+2$, whereas $V(n)=V(n')+V(n'')$. By the induction hypothesis, $V(n')=\lceil n'/\varphi^2 \rceil$ and $V(n'')=\lceil n''/\varphi^2 \rceil$. 
 
We have ${\bf f}[n]=a$ and ${\bf f}[n-1]=b$, implying from \eqref{e:st} that $\{(n-1)/\varphi^2\}\in (1-1/\varphi^2,1)$ and thus $\{n/\varphi^2\}\in (0,1/\varphi^2)$. At the same time, ${\bf f}[n']={\bf f}[n'']=a$ implies $\{n'/\varphi^2\},\{n''/\varphi^2\}\in (0,1-1/\varphi^2)$ and thus  
\[\{n'/\varphi^2\}+\{n''/\varphi^2\}+\{2/\varphi^2\}\in (2/\varphi^2,2).\] 
Comparing it to $\{n/\varphi^2\}=\{(n'+n''+2)/\varphi^2\}\in (0,1/\varphi^2)$, we see that 
\[\{n/\varphi^2\}=\{n'/\varphi^2\}+\{n''/\varphi^2\}+\{2/\varphi^2\}-1.\] 
But since $n=n'+n''+2$ and $x=\lfloor x \rfloor + \{x\}$ for every $x$, this also means that 
\[\lfloor n/\varphi^2\rfloor=\lfloor n'/\varphi^2\rfloor +\lfloor n''/\varphi^2\rfloor +1.\] 
Finally, since $k/\varphi^2$ is not an integer for any integer $k>0$,
we have
$\lceil k/\varphi^2 \rceil = \lfloor k/\varphi^2\rfloor +1$, so that  
\[\lceil n/\varphi^2 \rceil=\lceil n'/\varphi^2 \rceil+\lceil n''/\varphi^2\rceil.\] 
It remains to use the induction hypothesis to establish 
\[V(n)=V(n')+V(n'')=\lceil n'/\varphi^2 \rceil+\lceil n''/\varphi^2\rceil=\lceil n/\varphi^2 \rceil,\] 
which was to be proved. 
 
To complete the part of the proof concerning ${\bf f}[n]=a$, it remains to notice that $\lceil n/\varphi^2 \rceil$ is equal to the number of $b$s in ${\bf f}(0..n]$ plus one, due to \eqref{e:st}. 
 
Now for ${\bf f}[n]=b$, it is sufficient to combine Propositions \ref{p:ab}, \ref{p:10} and \ref{p:23}: if ${\bf f}[n]=b$, then $n=[r0]_F $, where $m = [r]_F $ and ${\bf f}[m]=a$. Then  
\[V(n)=V(m)=\lceil m/\varphi^2 \rceil = \lceil n/\varphi^3 \rceil.\]  
Here ${\bf f}(0..n]=\mu({\bf f}(0..m])$, and so the  occurrences of $aa$ in $
{\bf f}(0..n]$ correspond exactly to occurrences of $b$ in $\mu({\bf f}(0..m])$. 
The theorem is proved. 
\end{proof} 
 
The theorem ensures
that the sequence $(V(n))$ grows as depicted in Fig.~\ref{f:1}. The two visible straight lines correspond to the symbols of the Fibonacci word equal to $a$ (the upper line) or $b$ (the lower line). 

\section{Fibonacci-regular representation}

A sequence $(s(n))_{n \geq 0}$ is said to be {\it Fibonacci-regular}
if there exist an integer $k$, a
row vector $v$ of dimension $k$,
a column vector $w$ of dimension $k$,
and a $k \times k$ matrix-valued morphism $\rho$ on $\{0, 1\}^*$
such that 
$$ s( [z]_F ) = v \rho(z) w $$
for all canonical Fibonacci representations $z \in L_V$.
The triple $(v, \rho, w)$ is called a {\it linear representation};
see, for example, \cite{mss}.

Berstel \cite{berstel_fib} gave the following
linear representation for the function $L(n)$ we mentioned
previously in Section~\ref{notation}:
$$
v = [1 \ 0 \ 0 \ 0 ],
	 \quad
\rho(0) = \left[ \begin{array}{cccc}
	1 & 0 & 0 & 0 \\
	0 & 0 & 1 & 0 \\
	1 & 1 & 0 & 0 \\
	1 & 0 & 0 & 0
	\end{array} \right], \quad
\rho(1) = \left[ \begin{array}{cccc}
	0 & 1 & 0 & 1 \\
	0 & 0 & 0 & 0 \\
	0 & 1 & 0 & 0 \\
	0 & 0 & 0 & 0
	\end{array} \right], \quad
w = \left[ \begin{array}{c}
	1 \\
	0 \\
	0 \\
	1
	\end{array}
	\right].
$$
Hence $L(n)$ is Fibonacci-regular.

We can find a similar representation for the function $V(n)$.
For technical reasons it is easier to deal with the reversed
Fibonacci representation; one can then obtain the ordinary
linear representation by interchanging the roles of the vectors
and taking the transposes of the matrices.

\begin{theorem}
$V(n)$ has the reversed linear representation $(t, \gamma, u)$, where
\begin{align*}
t &= [ 1\ 0 \ 0 \ 0 \ 0 \ 0 \ 0 \ 0 ], \quad &
u &= [ 1\ 1 \ 1 \ 1 \ 1 \ 2 \ 1 \ 4 ]^T \\
\gamma(0) & = \left[ \begin{array}{rrrrrrrr}
	0& 1& 0& 0& 0& 0& 0& 0 \\
	0& 0& 0& 1& 0& 0& 0& 0 \\
	-1& 1& 0& 1& 0& 0& 0& 0\\
	0& 0& 0& 0& 1& 0& 0& 0\\
	0& 0& 0& 0& 0& 0& 1& 0 \\
	-1& 0& 0& 2& 1& 0& 0& 0\\
	 1&-1& 0&-3& 3& 0& 1& 0\\
	-1&-1& 0& 2& 3& 0& 1& 0\\
	\end{array} \right],  &
\gamma(1) & = \left[ \begin{array}{cccccccc}
0&0&1&0&0&0&0&0\\
0&0&1&0&0&0&0&0\\
0&0&0&0&0&0&0&0\\
0&0&0&0&0&1&0&0\\
0&0&0&0&0&1&0&0\\
0&0&0&0&0&0&0&0\\
0&0&0&0&0&0&0&1\\
0&0&0&0&0&0&0&0\\
	\end{array} \right] .
\end{align*}
\end{theorem}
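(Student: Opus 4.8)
The statement to prove is that, for every non-negative integer $n$ whose Zeckendorf digits are $k_0 k_1 \cdots k_m$ (read from the least to the most significant position, i.e.\ the reversed canonical representation $\tilde z$), one has $V(n) = t\,\gamma(\tilde z)\,u$, with the ordinary left-to-right representation then recovered by swapping $t \leftrightarrow u$ and transposing the $\gamma(d)$. The plan is to prove this by induction on the length of $\tilde z$, identifying the running state vector $\sigma(y) := t\,\gamma(y)$, attached to each prefix $y$ of $\tilde z$, with a tuple of concrete integer sequences, and then checking that appending a digit $0$ or $1$ acts on that tuple exactly as $\gamma(0)$ and $\gamma(1)$ prescribe.

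First I would fix the domain: the prefixes $y$ range over the reversed canonical language, which is precisely the set of words over $\{0,1\}$ avoiding the factor $11$, since Zeckendorf representations have no two adjacent ones. For each such $y$, let $m=m(y)$ be the integer it represents. I would then guess, and verify, that each coordinate $\sigma(y)_i$ is an explicit function $g_i(m)$; the natural candidates, dictated by Theorem \ref{t:main} and Propositions \ref{p:ab} and \ref{p:01}, are the constant $1$, the target value $V(m)$, the auxiliary count $V_2(m)$ of factorizations whose smallest factor is at least $f_2$, the values of $V$ on the neighbours $[y0]_F$ and $[y0]_F\pm 1$ that occur in the recurrences, and enough ``phase'' data to record both the current symbol ${\bf f}[m]$ and the parity of the trailing run of zeros. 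Once the $g_i$ are pinned down, the output vector $u=[\,1\ 1\ 1\ 1\ 1\ 2\ 1\ 4\,]^T$ is simply the combination expressing $V(m)$ in terms of the $g_i(m)$, and the initial vector $t=[\,1\ 0\ \cdots\ 0\,]$ is fixed by the empty prefix; both are then confirmed against the small values $V(0),V(1),\dots$ computed directly from the definition.

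The core of the argument is verifying that $\gamma(0)$ and $\gamma(1)$ realize the one-digit extension. Reading one further digit moves $m$ to a neighbour whose value of $V$ is controlled by Propositions \ref{p:10}, \ref{p:00} and \ref{p:01} (up to the reversal/transposition bookkeeping noted above), so each column of $\gamma(d)$ must be exactly the linear combination expressing the new $g_i$ in terms of the old ones. The main obstacle is that those propositions branch on the parity of the trailing run of zeros and on whether ${\bf f}[m]=a$ or $b$, whereas a single pair of matrices must act \emph{uniformly}; the proof therefore hinges on showing that the state silently carries the parity/phase information, so that one transition reproduces every case, and on matching the less transparent entries of $\gamma(0)$ (the $-1,2,-3,3$) against the additive recurrence of Proposition \ref{p:00} together with the even-$k$ branch of Proposition \ref{p:01}. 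To make this finite check fully rigorous rather than a hand computation, I would exploit that Propositions \ref{p:10}--\ref{p:01} already show $V$ to be Fibonacci-regular: one produces \emph{some} linear representation from the recurrences, and then invokes the standard equivalence criterion for $\mathbb{Q}$-recognizable series, namely that two representations of dimensions $d_1,d_2$ computing the same series agree on all admissible words iff they agree on those of length below $d_1+d_2$. Checking the stated $(t,\gamma,u)$ against such a reference representation on that bounded, explicitly enumerable set of canonical words then completes the induction and establishes the theorem.
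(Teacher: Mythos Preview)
Your proposal is a plan rather than a proof, and it diverges from the paper's argument in a way that makes the work harder, not easier. The paper does not try to interpret the eight coordinates semantically as functions of the integer $m$ (such as $V(m)$, $V_2(m)$, a constant, or phase flags). Instead, setting $g(x)=V([x]_F)$ for canonical $x$ and $g(x)=0$ otherwise, it identifies the state \emph{syntactically} as the column vector
\[
\bigl(g(x),\,g(x0),\,g(x1),\,g(x00),\,g(x000),\,g(x100),\,g(x0000),\,g(x10000)\bigr)^{T},
\]
and shows that passing from $x$ to $xi$ is realised by left-multiplication by $\gamma(i)$. This reduces the theorem to six concrete word-level identities among the $g$-values (for instance $g(x01)=-g(x)+g(x0)+g(x00)$ and $g(x00000)=g(x)-g(x0)-3g(x00)+3g(x000)+g(x0000)$), each of which follows from Propositions~\ref{p:10}--\ref{p:01} by a short case split on the parity of the trailing block of zeros in $x$. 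In particular, $u$ is not ``the combination expressing $V(m)$ in terms of the $g_i(m)$'': it is simply the initial state at $x=\epsilon$, namely $(g(\epsilon),g(0),g(1),g(00),g(000),g(100),g(0000),g(10000))=(1,1,1,1,1,2,1,4)$, and $t$ projects onto the first coordinate $g(x)$.

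Your fallback---build \emph{some} linear representation from the recurrences and then invoke a bounded-length equivalence test---is sound in principle, but constructing that reference representation is exactly the step just described: one must exhibit a finite $\gamma$-stable family of functions containing $g$, and the eight word-extensions above are precisely such a family. Once you have it, the equivalence test is superfluous, since the construction already yields the stated $(t,\gamma,u)$. The genuine gap in your write-up is that you never pin down the coordinates or verify any transition identity; the guesses you list (a constant $1$, $V_2(m)$, values of $V$ at $[y0]_F\pm1$, separate phase data) do not match the actual representation and would not, as stated, be closed under the $\gamma$-action.
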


\begin{proof}
Define $g(x) = V([x]_F)$ if $x$ is a valid canonical representation
(that is, containing no leading zeroes, and no two consecutive 1's),
and $0$ otherwise.
It suffices to show, for all $x \in \{ 0,1 \}^*$ and $i \in \{ 0,1 \}$, that
\begin{equation}
\left[
\begin{array}{c}
g(xi) \\
g(xi0) \\
g(xi1) \\
g(xi00) \\
g(xi000) \\
g(xi100) \\
g(xi0000) \\
g(xi10000)  
\end{array} \right]
= \gamma(i) 
\left[ \begin{array}{c}
g(x) \\
g(x0) \\
g(x1) \\
g(x00) \\
g(x000) \\
g(x100) \\
g(x0000) \\
g(x10000) 
\end{array} \right] .
\label{mat1}
\end{equation}
Once we prove this, it is then easy to see (using induction on $|z|$) 
that, if $z$ is the Fibonacci representation of $n$, then
$t \gamma( z^R) u = V( n )$, where $z^R$ is the reversal of $z$.

Thus it suffices to verify Eq.~\eqref{mat1}.  This is equivalent to proving
the following identities for $x$. 
\begin{align}
g(x01) &= -g(x) + g(x0) + g(x00) \label{e1} \\
g(x10) &= g(x1) \label{e5} \\
g(x0100) &= -g(x) + 2g(x00) + g(x000) \label{e2} \\
g(x1000) &= g(x100) \label{e6}\\
g(x010000) &= -g(x)- g(x0) + 2g(x00) + 3g(x000) + g(x0000) \label{e4} \\
g(x00000) &= g(x) - g(x0) -3g(x00) + 3g(x000) + g(x0000) \label{e3}.
\end{align}

Identities \eqref{e5} and \eqref{e6} are particular cases of Proposition \ref{p:10} (b). 

To prove \eqref{e1}, consider separately two cases: if $x$ ends with an even number of zeros, then $g(x)=g(x0)$ due to Proposition \ref{p:10} (b) and $g(x00)=g(x01)$ due to Proposition \ref{p:01}, so the identity holds. If $x$ ends with an odd number of zeros, $x=z10^{2k+1}$, $k \geq 0$, then due to Proposition \ref{p:01},
\[g(x01)=g(z10^{2k+2}1)=g(z10^{2k+2})+g(z(01)^{k+1})=g(x0)+g(z(01)^{k+1}).\]
On the other hand, due to Propositions \ref{p:10} and \ref{p:00},
\[g(x00)=g(x0)=g(z10^{2k+2})=g(z10^{2k})+g(z(01)^{k+1})=g(x)+g(z(01)^{k+1}).\]
Comparing these equalities, we get \eqref{e1}.

To prove \eqref{e2}, it is sufficient to use Proposition \ref{p:00} to get
\[g(x0100)=g(x01)+g(x001),\]
and then to use \eqref{e1} twice, for $g(x01)$ and for $g(x001)$.

To prove \eqref{e4}, it is sufficient to use Propositions \ref{p:00} and \ref{p:10} to get
\[g(x010000)=g(x0100)+g(x00101)=g(x0100)+g(x00100).\]
Now \eqref{e4} is obtained immediately by summing up \eqref{e2} applied to $x$ and to $x0$.

Finally, to prove \eqref{e3}, we again have to consider two cases. If $x=z10^{2k}$, $k \geq 0$, then due to Proposition \ref{p:10}, $g(x0^5)=g(x0000)$, $g(x000)=g(x00)$, $g(x0)=g(x)$, and the equality holds. If now $x=z10^{2k+1}$, $k \geq 0$, then \eqref{e3} immediately simplifies with Proposition \ref{p:10} as
\[g(z10^{2k+6})-g(z10^{2k+4})= 3[g(z10^{2k+4})-g(z10^{2k+2})]-[g(z10^{2k+2})-g(z10^{2k})].\]
Applying Proposition \ref{p:00}, we reduce it to
\[g(z(01)^{k+3})=3g(z(01)^{k+2})-g(z(01)^{k+1}),\]
or, writing $y = z(01)^{k+1}$ and applying Proposition \ref{p:01} again,
\[g(y0100)=3g(y00)-g(y).\]
But this is exactly \eqref{e2} since $g(y00)=g(y000)$.

\end{proof}

\section*{References}

\end{document}